\newtheorem{thm}{Theorem}[section]
\newtheorem{coro}[thm]{Corollary}
\newtheorem{prop}[thm]{Proposition}
\theoremstyle{definition}
\newtheorem{defi}[thm]{Definition}
\newtheorem{remark}[thm]{Remark}
\begin{document}

\title{Class numbers and invariant characters of $\mathfrak{sl}_2(\mathbb{F}_p)$}

\author{Zhe Chen \and Yongqi Feng}

\address{Department of Mathematics, Shantou University, Shantou, 515821, China}
\email{zhechencz@gmail.com}

\address{}
\email{yfeng1@foxmail.com}

\begin{abstract}
Let $p$ be a prime and let $S_2(\Gamma(p))$ be the space of weight $2$ cusp forms for the principal congruence subgroup $\Gamma(p)$. Then $\mathrm{SL}_2(\mathbb{F}_p)$ acts on $S_2(\Gamma(p))$ in a natural way. Around 1928, Hecke proved that if  $p>3$ and $p\equiv 3\mod 4$, then the class number of $\mathbb{Q}(\sqrt{-p})$ is equal to the difference between the multiplicities of two particular irreducible representations of $\mathrm{SL}_2(\mathbb{F}_p)$  in  $S_2(\Gamma(p))$. In this paper we prove a Lie algebra analogue of this result. As an application we extend Hecke's result to $\mathrm{SL}_2(\mathbb{Z}/p^r)$ (acting on $S_2(\Gamma(p^r))$) for any $r\geq 2$. 
\end{abstract}

\maketitle

\section{Introduction}

Let $p$ be a prime and let $S_2(\Gamma(p))$ be the space of weight two cusp forms for the principal congruence subgroup $\Gamma(p)$. Since $\mathrm{SL}_2(\mathbb{F}_p)$ acts on the modular curve $X(p)$ of level $p$, it also acts on the space $S_2(\Gamma(p))$ in a natural way, by viewing the latter as the space of holomorphic differential forms on $X(p)$. 

\vspace{2mm} From now on, we assume that $p$ is odd.

\vspace{2mm} Then the finite group $\mathrm{SL}_2(\mathbb{F}_p)$ admits exactly two  (inequivalent) irreducible representations of dimension $\frac{p-1}{2}$; see e.g.\ \cite[Table~12.1]{DM_book_2nd_edition}. They are both cuspidal representations. Let us denote  by $m_{+}$ and $m_-$  their multiplicities  in  $S_2(\Gamma(p))$, respectively.  Hecke \cite{Hecke_1928_cuspform_MathZ,Hecke1928,Hecke_1931_modulfunktionen} showed that: If  $p>3$ and $p\equiv 3\mod 4$, then
$$m_+-m_-=h(-p),$$
where $h(-p)$ denotes the class number of $\mathbb{Q}(\sqrt{-p})$. Details and modern expositions  on Hecke's work can be found in \cite{Casselman_2017_Hecke, Gross_2018_Hecke}.

\vspace{2mm} In this paper we prove an analogue of Hecke's result for \emph{invariant characters} (see Section~\ref{sec: invariant characters}) of the Lie algebra $\mathfrak{sl}_2(\mathbb{F}_p)$. Instead of $S_2(\Gamma(p))$, we work with $S_2(\Gamma(p^r))$ for any $r\geq 2$. This starts with two observations: 
\begin{itemize}
\item The additive group of $\mathfrak{sl}_2(\mathbb{F}_p)$ acts on $S_2(\Gamma(p^r))$ via taking the kernel of reduction map $\mathfrak{sl}_2(\mathbb{F}_p)\cong \mathrm{Ker}(\mathrm{SL}_2(\mathbb{Z}/p^r)\rightarrow\mathrm{SL}_2(\mathbb{Z}/p^{r-1}))$;
\item  $\mathfrak{sl}_2(\mathbb{F}_p)$ admits exactly two irreducible invariant characters associated to the regular nilpotent orbits.
\end{itemize}
Denote  by $n_+$ and $n_-$ the multiplicities of the above  two invariant characters in $S_2(\Gamma(p^r))$, respectively. Our main result is Theorem~\ref{thm:main}, which  asserts that, if $p>3$ and $p\equiv 3\mod 4$, then 
$$n_+-n_-=p^{{2r-3}}\cdot h(-p).$$
The   proof roughly consists of three steps:  The first step is a ``normalisation'' of the  regular nilpotent orbit invariant characters of $\mathfrak{sl}_2(\mathbb{F}_p)$, which allows one to simplify the computation of $n_+-n_-$ using Gauss sums; the second step is to apply an idea used in  \cite{Casselman_2017_Hecke,Gross_2018_Hecke}, namely to use the Woods Hole fixed point formula to transfer the computation into  evaluation of  differentials; 
the third step is to use the structure of   $\mathrm{SL}_2(\mathbb{Z}/p^r)$ to compute the fixed points (and then the corresponding differentials) on the modular curve $X(p^r)$. 

\vspace{2mm} In Corollary~\ref{coro: extend Hecke} we   give an extension of Hecke's result to $\mathrm{SL}_2(\mathbb{Z}/p^r)$ for any $r\geq 2$; this is achieved by combining the above main result and Clifford's theorem. In this $r\geq2$ case one need to consider a family of pairs of dual representations, not just a single pair of dual representations (see  Remark~\ref{remark:single pair}). Over the past more than 90 years, there have been various extensions of Hecke's result, among which we mention the works of McQuillan \cite{McQuillan_1962_AJM}  and of Panda \cite{Panda_2020_JNT}: Instead of the case of prime powers $p^r$, they considered the case of products of distinct primes, so their results and ours may be viewed as complementary.

\vspace{2mm} In Proposition~\ref{prop:multi sum} we determine $n_++n_-$ by  studying the sum representation space $S_2(\Gamma(p^r))\oplus \overline{S_2(\Gamma(p^r))}$; note that this together with  Theorem~\ref{thm:main} allows one  to determine $n_+$ and $n_-$ separately.

\vspace{2mm} \noindent {\bf Acknowledgement.} This work is inspired by  \cite{Casselman_2017_Hecke, Gross_2018_Hecke}. During the preparation, ZC is partially supported by the  Natural Science Foundation of Guangdong No.~2023A1515010561, and YF by the  Natural Science Foundation of Guangdong No.~2022A1515010912.

\section{Invariant characters}\label{sec: invariant characters}

Let $G$  be a connected reductive group, defined over a complete discrete valuation ring $\mathcal{O}$ with a fixed uniformiser $\pi$ and finite residue field $\mathbb{F}_q$. Let $\mathfrak{g}$ be the Lie algebra of $G_{\mathbb{F}_q}$. Then for any $r\geq 2$, we have 
$$\mathfrak{g}(\mathbb{F}_q)\cong\mathrm{Ker}(G(\mathcal{O}/\pi^r)\longrightarrow G(\mathcal{O}/\pi^{r-1})),$$
in which the left hand side is viewed as the additive group and the right hand side is given by the reduction map modulo $\pi^{r-1}$.  In particular, the additive group of $\mathfrak{g}(\mathbb{F}_q)$ can be viewed as a normal subgroup of $G(\mathcal{O}/\pi^r)$, and hence $G(\mathcal{O}/\pi^r)$ acts on  $\mathfrak{g}(\mathbb{F}_q)$ through conjugation; this conjugation action factors through $G(\mathbb{F}_q)$ (viewed as a quotient group of $G(\mathcal{O}/\pi^r)$). 

\vspace{2mm} For more details on the above construction, see \cite[II]{Demazure_Gabriel_1980intro_AG_AG}. 

\vspace{2mm} In the remaining part of this paper we always assume that $r\geq2$.

\begin{defi}
Let $\chi$ be a complex (and not necessarily irreducible) character of the additive group $\mathfrak{g}(\mathbb{F}_q)$. If $\chi(gxg^{-1})=\chi(x)$ for any $g\in G(\mathcal{O}/\pi^r)$ and any $x\in \mathfrak{g}(\mathbb{F}_q)$, then we call $\chi$ an invariant character of $\mathfrak{g}(\mathbb{F}_q)$ with respect to $G(\mathcal{O}/\pi^r)$.
\end{defi}

For $r=2$ and $\mathrm{char}(\mathcal{O})>0$, invariant characters play an important role in the representation theory of $G(\mathbb{F}_q)$, and for general $r\geq2$ and $\mathcal{O}$ they are closely related with the smooth representations of $G(\mathcal{O})$; see e.g.\ \cite{Lusztig_1987_fourier_Lie_alg, Let2005book, Let09CharRedLieAlg, ChenStasinski_2016_algebraisation, ChenStasinski_2023_algebraisation_II}. 

\vspace{2mm} Below we focus on the case that $G={\mathrm{SL}_2}$ and $\mathcal{O}=\mathbb{Z}_p$.

\vspace{2mm} Let $\slashed{\square}$ be a fixed quadratic non-residue modulo $p$. Then, since $p$ is odd, there are precisely two regular nilpotent $\mathrm{SL}_2(\mathbb{Z}/p^r)$-orbits in $\mathfrak{sl}_2(\mathbb{F}_p)$, with representatives
\begin{equation*}
u:=\begin{bmatrix}
0 & 0  \\
1 & 0
\end{bmatrix}
\quad \textrm{and} \quad
v:=\begin{bmatrix}
0 & 0   \\
\slashed{\square} & 0
\end{bmatrix},
\end{equation*}
respectively (viewing $\mathfrak{sl}_2(\mathbb{F}_p)$ as a normal subgroup of $\mathrm{SL}_2(\mathbb{Z}/p^r)$, these two matrices are understood as 
$\begin{bmatrix}
1 & 0  \\
p^{r-1} & 1
\end{bmatrix}$
and
$\begin{bmatrix}
1 & 0  \\
p^{r-1}\slashed{\square} & 1
\end{bmatrix}$,
respectively).

\vspace{2mm} Let $\psi\colon \mathbb{F}_p\rightarrow \mathbb{C}^{\times}$ be the non-trivial irreducible character of the additive group $\mathbb{F}_p$ taking $1$ to $\zeta_p:=\exp\left(\frac{2\pi i}{p}\right)$. Then we get two invariant characters of $\mathfrak{sl}_2(\mathbb{F}_p)$:
\begin{equation*}
\chi_{+}(-):=\sum_{ h\in \mathrm{SL}_2(\mathbb{F}_p)/C_{\mathrm{SL}_2(\mathbb{F}_p)}(u) }\psi\left( \mathrm{Tr} (u^h\cdot(-)) \right)
\end{equation*}
and
\begin{equation*}
\chi_{-}(-):=\sum_{ h\in \mathrm{SL}_2(\mathbb{F}_p)/C_{\mathrm{SL}_2(\mathbb{F}_p)}(v) }\psi\left( \mathrm{Tr} (v^h\cdot(-)) \right).
\end{equation*}
Note that these two invariant characters are irreducible in the sense that they cannot be decomposed into a sum of two non-zero invariant characters (see \cite[Subsection~2.4]{Let09CharRedLieAlg}).

\section{The main result}\label{sec:main result}

Since $\mathrm{SL}_2(\mathbb{Z}/p^r)$ acts on $S_2(\Gamma(p^r))$, so does $\mathfrak{sl}_2(\mathbb{F}_p)$ (viewed as a normal subgroup of $\mathrm{SL}_2(\mathbb{Z}/p^r)$). By Clifford's theorem (see e.g.\ \cite[6.2]{Isaacs_CharThy_Book}) we know that $S_2(\Gamma(p^r))$ decomposed into a direct sum of irreducible invariant representations of $\mathfrak{sl}_2(\mathbb{F}_p)$. 

\vspace{2mm} Let us denote by $n_{+}$ (resp.\ $n_{-}$) the multiplicity of $\chi_{+}$ (resp.\ $\chi_-$) in $S_2(\Gamma(p^r))$.

\begin{thm}\label{thm:main}
If $p>3$ and $p\equiv 3\mod 4$, then 
$$n_{+}-n_{-}=p^{2r-3}\cdot h(-p).$$
If $p\equiv 1 \mod 4$, then $n_+-n_-=0$. If $p=3$, then $n_{+}-n_{-}=3^{2r-4}=p^{2r-4}\cdot h(-p)$.
\end{thm}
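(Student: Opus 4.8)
The plan is to reduce the computation of $n_+ - n_-$ to a sum of Gauss-sum-weighted traces of specific elements of $\mathrm{SL}_2(\mathbb{Z}/p^r)$ acting on $S_2(\Gamma(p^r))$, and then evaluate those traces geometrically via a fixed-point formula. First I would carry out the ``normalisation'' step: writing out $n_\pm = \langle \chi_\pm, \mathrm{tr}(\cdot\, | S_2(\Gamma(p^r)))\rangle$ as an average over $\mathfrak{sl}_2(\mathbb{F}_p)$, and using the explicit definitions of $\chi_+$ and $\chi_-$ as orbit sums, I would interchange the order of summation. The inner sum over $\mathfrak{sl}_2(\mathbb{F}_p)$ of $\psi(\mathrm{Tr}(u^h \cdot x))$ times a character value is, after collecting terms, governed by a quadratic Gauss sum; the difference $\chi_+ - \chi_-$ is what picks out the quadratic character $\left(\tfrac{\cdot}{p}\right)$ and hence (for $p \equiv 3 \bmod 4$) introduces the factor $\sqrt{-p}$ that will eventually become $h(-p)$. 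The outcome of this step should be an identity expressing $n_+ - n_-$ as $\frac{1}{p}\sum_{t} \left(\tfrac{t}{p}\right)\,\mathrm{tr}\!\left(\gamma_t \,\middle|\, S_2(\Gamma(p^r))\right)$ for suitable elements $\gamma_t \in \mathrm{SL}_2(\mathbb{Z}/p^r)$ lifting the torus/unipotent part, so that only the conjugacy classes of these $\gamma_t$ matter.

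Next I would invoke the Woods Hole (holomorphic Lefschetz) fixed point formula, exactly as in \cite{Casselman_2017_Hecke, Gross_2018_Hecke}: for $\gamma$ acting on the curve $X(p^r)$ with only simple fixed points, one has $\mathrm{tr}(\gamma \mid H^0(X, \Omega^1)) = \sum_{P : \gamma P = P} \frac{1}{1 - (d\gamma)_P^{-1}}$ up to a correction from $H^0(X,\mathcal{O}) = \mathbb{C}$, where $(d\gamma)_P$ is the derivative of $\gamma$ at the fixed point $P$ in a local coordinate. Since $S_2(\Gamma(p^r)) = H^0(X(p^r), \Omega^1)$, this converts each trace into a finite sum over fixed points. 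I would then classify the fixed points of the relevant $\gamma_t$ on $X(p^r)$: these come from elliptic points, from cusps, and possibly from elliptic curves with extra automorphisms, and for each I would compute the local derivative (a root of unity). The structure of $\mathrm{SL}_2(\mathbb{Z}/p^r)$ — in particular how a semisimple element of order prime to $p$ in the upper quotient lifts, and how the $p$-part contributes — is what controls which fixed points occur and with what multiplicity, producing the power $p^{2r-3}$.

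The third step is to assemble the pieces. Summing the fixed-point contributions against $\left(\tfrac{t}{p}\right)$, the cusp contributions should cancel or combine into something elementary, and the elliptic-point contributions should be recognisable as a class number via the classical Hurwitz–Kronecker class number relations (or directly via the Eichler–Selberg trace formula / the Eichler mass formula for $\mathbb{Q}(\sqrt{-p})$). Concretely, the terms indexed by $t$ with $\left(\tfrac{t}{p}\right) = -1$ correspond to elliptic points with CM by an order in $\mathbb{Q}(\sqrt{-p})$, and counting them with the right weights gives $h(-p)$; the extra factor $p^{2r-3}$ arises because each such point on $X(p)$ lifts to $p^{2r-3}$ (or a comparable count of) points on $X(p^r)$ once one tracks the fibres of $X(p^r) \to X(p)$ together with the $p$-part of the group action. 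The cases $p \equiv 1 \bmod 4$ and $p = 3$ are then handled by the same machinery: when $p \equiv 1 \bmod 4$ the Gauss sum is real and a sign symmetry (pairing $t$ with $-t$, using $\left(\tfrac{-1}{p}\right) = 1$) forces the difference to vanish, while $p = 3$ is a low-level exceptional case where $h(-3) = 1$ and one tracks the extra automorphisms of the elliptic curve with $j = 0$ directly, accounting for the shift $p^{2r-4}$ instead of $p^{2r-3}$.

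The main obstacle I anticipate is the fixed-point bookkeeping in step two: getting the precise list of fixed points of $\gamma_t$ on $X(p^r)$ and the exact local derivatives, keeping careful track of stabiliser orders and of the subtle difference between $\mathrm{PSL}_2$ and $\mathrm{SL}_2$ acting on the curve (the element $-I$ acts trivially, so elements and their negatives must be handled together), and then verifying that the $p$-power dependence on $r$ comes out to exactly $2r-3$. A secondary technical point is justifying that the relevant $\gamma_t$ indeed have only simple fixed points (or else using the more general Atiyah–Bott form of the formula), and handling the boundary contribution from the cusps of the (compactified) modular curve correctly.
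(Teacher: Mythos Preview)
Your overall three-step strategy (Gauss-sum normalisation, Woods Hole fixed point formula, bookkeeping of fixed points) matches the paper's, but you have misidentified \emph{which} fixed points carry the class number, and this is a genuine gap rather than a bookkeeping issue.

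The elements whose traces you must compute all lie in $\mathfrak{sl}_2(\mathbb{F}_p)$, realised inside $\mathrm{SL}_2(\mathbb{Z}/p^r)$ as the kernel of reduction modulo $p^{r-1}$. Every nontrivial such element therefore has $p$-power order. The stabilisers of the elliptic points $i$ and $e^{2\pi i/3}$ (and their preimages on $X(p^r)$) have orders $4$ and $6$, prime to $p$; hence ${}^{s}\widetilde{G}_i\cap\mathfrak{sl}_2(\mathbb{F}_p)={}^{s}\widetilde{G}_{e^{2\pi i/3}}\cap\mathfrak{sl}_2(\mathbb{F}_p)=\{I_2\}$ for every $s$, and no nonzero element of $\mathfrak{sl}_2(\mathbb{F}_p)$ fixes any elliptic point. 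So your proposed source of $h(-p)$, namely CM elliptic points counted via Hurwitz--Kronecker, contributes exactly zero. All of the action is at the cusps, which you planned to treat as ``cancelling or elementary.''

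What actually happens is that the fixed points lie entirely in $\pi^{-1}(\infty)$, the local derivative at such a cusp is a $p$-th root of unity $\zeta_p^{a^2}$ (or $\zeta_p^{\slashed{\square}a^2}$), and the class number enters through the cyclotomic identity
\[
\sum_{a\in\mathbb{F}_p^\times}\frac{1}{1-\zeta_p^{a^2}}=\frac{p-1}{2}+h(-p)\sqrt{-p}\qquad(p\equiv 3\bmod 4),
\]
which is essentially Dirichlet's class number formula in disguise. The power $p^{2r-3}$ then falls out of counting how many cusps are fixed (a group-theoretic count inside $\mathrm{SL}_2(\mathbb{Z}/p^r)$), not from lifting CM points along $X(p^r)\to X(p)$. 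Your outline would have you looking for $h(-p)$ in the wrong place and discarding the terms that actually produce it.
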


\begin{proof}
Consider the following characters of $\mathfrak{sl}_2(\mathbb{F}_p)$
$$N_+:=\frac{1}{2} \cdot \sum_{s\in T} \psi\left( \mathrm{Tr} ( {u}^s\cdot (-) ) \right)$$
and
$$N_-:=\frac{1}{2} \cdot \sum_{s\in T} \psi\left( \mathrm{Tr} ( {v}^s\cdot (-) ) \right),$$
in which $T$ denotes the group of diagonal matrices in $\mathrm{SL}_2(\mathbb{F}_p)$. By construction both $N_+$ and $N_-$ are multiplicity-free and are of degree $\frac{p-1}{2}$; moreover the multiplicity of every irreducible constituent of $N_+$ (resp.\ $N_-$) in $S_2(\Gamma(p^r))$ is $n_+$ (resp.\ $n_-$). 

\vspace{2mm} Let us write $\chi_r$ for the character of  $S_2(\Gamma(p^r))$ (viewed as a representation space of $\mathfrak{sl}_2(\mathbb{F}_p)$). Then we have
\begin{equation*}
\begin{split}
\langle  N_+-N_-, \chi_r \rangle_{\mathfrak{sl}_2(\mathbb{F}_p)}
&=\langle  N_+-N_-,n_+N_+ + n_-N_-   \rangle_{\mathfrak{sl}_2(\mathbb{F}_p)}\\
&=n_+\langle  N_+,N_+  \rangle_{\mathfrak{sl}_2(\mathbb{F}_p)}
- n_-\langle  N_-,N_-  \rangle_{\mathfrak{sl}_2(\mathbb{F}_p)}\\
&=\frac{p-1}{2}\cdot (n_{+}-n_{-}),
\end{split}
\end{equation*}
in which the inner product is with respect to the space of characters of the additive group of $\mathfrak{sl}_2(\mathbb{F}_p)$. From this we get
\begin{equation}\label{temp:1}
\begin{split}
n_{+}-n_{-}
&=\frac{2}{p-1}\cdot\langle  N_+-N_-, \chi_r  \rangle_{\mathfrak{sl}_2(\mathbb{F}_p)}\\
&=\frac{2}{(p-1)p^3}\cdot \sum_{g\in \mathfrak{sl}_2(\mathbb{F}_p)}  (N_+-N_-)(g)\cdot \chi_r(g^{-1}).
\end{split}
\end{equation}
Meanwhile, for a given $g\in M_2(\mathbb{F}_p)$ let us write 
$g=\begin{bmatrix}
a(g) & b(g)  \\
c(g) & d(g)
\end{bmatrix}$; we also need to use the quadratic Gauss sum $G\left( \frac{x}{p} \right)$ (see \cite[(3.27)]{Iwaniec_et_Kowalski_AnalyNumThyBk_2004}).  Then
\begin{equation}\label{temp2}
\begin{split}
(N_+-N_-)(g)
&=\frac{1}{2}\cdot 
\left(
\sum_{h\in \mathbb{F}_p} \zeta_p^{b(g)\cdot h^2} 
- \sum_{h'\in \mathbb{F}_p} \zeta_p^{b(g)\cdot \slashed{\square}\cdot  h'^2}
\right)\\
&=\frac{1}{2}\cdot\left(  G\left( \frac{b(g)}{p} \right)  - G\left( \frac{b(g)\cdot\slashed{\square}}{p} \right)   \right)\\
&=\left( \frac{b(g)}{p} \right) \cdot \sqrt{ (-1)^{\frac{p-1}{2}}\cdot p},
\end{split}
\end{equation}
in which the last equality follows from \cite[(3.29) and (3.22)]{Iwaniec_et_Kowalski_AnalyNumThyBk_2004}  (in the last row $\left( \frac{b(g)}{p} \right)$ means the Legendre symbol).

\vspace{2mm} Combining \eqref{temp:1} and \eqref{temp2} we see
\begin{equation}\label{temp3}
n_{+}-n_{-}
=\frac{2\cdot \sqrt{ (-1)^{\frac{p-1}{2}}\cdot p}}{(p-1)\cdot p^3} 
\cdot 
\sum_{g\in \mathfrak{sl}_2(\mathbb{F}_p)\setminus \{0\} }  \left( \frac{b(g)}{p} \right) \cdot \chi_r(g^{-1}).
\end{equation}
To continue we need to evaluate $\chi_r(g^{-1})$; first we follow \cite{Casselman_2017_Hecke, Gross_2018_Hecke} to apply the Woods Hole fixed point formula (or, the Atiyah--Bott--Eichler formula; see \cite[(4.14)]{Atiyah_et_Bott_Lefschetz_II_1968}):
$$\chi_r(g^{-1})=1-\sum_{x\in X(p^r);\ gx=x}\frac{1}{1-dg_x},$$
where $dg_x$ denotes the differential of $g$ at $x$ and $X(p^r)$ denotes the modular curve for the principal congruence subgroup
$$\Gamma(p^r):=\mathrm{Ker}(\mathrm{SL}_2(\mathbb{Z})\longrightarrow\mathrm{SL}_2(\mathbb{Z}/p^r)).$$
So  \eqref{temp3} becomes
\begin{equation}\label{temp4}
n_{+}-n_{-}
=\frac{-2\cdot \sqrt{ (-1)^{\frac{p-1}{2}}\cdot p}}{(p-1)\cdot p^3} 
\cdot 
\sum_{g\in \mathfrak{sl}_2(\mathbb{F}_p) \setminus \{0\}}  \left( \frac{b(g)}{p} \right) \cdot \sum_{x\in X(p^r);\ gx=x}\frac{1}{1-dg_x}.
\end{equation}
There are two things need to be done: to compute the fixed points on $X(p^r)$ and to compute the differentials $dg_x$.

\vspace{2mm} Consider the covering map of modular curves
$$\pi\colon X(p^r)\longrightarrow X(1)$$
whose Galois group is $\mathrm{SL}_2(\mathbb{Z}/p^r)/\{ \pm I_2 \}$; recall that $X(p^r):=\Gamma(p^r)\backslash\mathbb{H}^*$ and $X(1):=\mathrm{SL}_2(\mathbb{Z})\backslash\mathbb{H}^*$, where $\mathbb{H}^*$ denotes the extended upper half-plane $\mathbb{H}^*:=\mathbb{H}\cup \mathbb{Q} \cup \{\infty\}$. If $x\in X(p^r)$ is fixed by some non-trivial $g\in \mathfrak{sl}_2(\mathbb{F}_p)$, then $\pi$ is ramified over $\pi(x)\in X(1)$, and hence $\pi(x)\in\{ i, e^{2\pi i/3}, \infty \}$;  the  stabiliser of those $x$ (in $\mathrm{SL}_2(\mathbb{Z}/p^r)$) would be correspondingly conjugated to one of the following subgroups
\begin{equation}\label{inertia groups}
\widetilde{G}_i:=\left\langle 
\begin{bmatrix}
 & 1 \\
-1 & 
\end{bmatrix} \right\rangle,
\quad
\widetilde{G}_{e^{2\pi i/3}}:=\left\langle 
\begin{bmatrix}
    & 1 \\
-1 & -1
\end{bmatrix}, -I_2 \right\rangle,
\quad
\widetilde{G}_{\infty}:=\left\langle
\begin{bmatrix}
1& 1 \\
 & 1
\end{bmatrix},\ -I_2 \right\rangle.
\end{equation}
For details, see \cite[Page~22]{Shimura_intro_1971}. 

\vspace{2mm} By direct computation one sees that 
$${^s\widetilde{G}_i\cap \mathfrak{sl}_2(\mathbb{F}_p)}={^s\widetilde{G}_i\cap \mathfrak{sl}_2(\mathbb{F}_p)}=I_2$$
for any $s\in \mathrm{SL}_2(\mathbb{Z}/p^r)$. (Viewing $\mathfrak{sl}_2(\mathbb{F}_p)$   as the kernel of the reduction map,  the last $I_2$ is understood as the zero matrix in $\mathfrak{sl}_2(\mathbb{F}_p)$.) In particular, if $g\in \mathfrak{sl}_2(\mathbb{F}_p)\setminus\{0\}$ fixes some $x\in X(p^r)$, then we must have that $x\in\pi^{-1}(\infty)$, in which case there is an $s\in \mathrm{SL}_2(\mathbb{Z}/p^r)$ satisfying that
\begin{equation*}
\begin{split}
C_{\mathfrak{sl}_2(\mathbb{F}_p)}(x)
&={^s\widetilde{G}_{\infty}\cap \mathfrak{sl}_2(\mathbb{F}_p)}\\
&={^s\left(\widetilde{G}_{\infty}\cap \mathfrak{sl}_2(\mathbb{F}_p)\right)}\\
&=
\left\{
I_2+p^{r-1}\cdot
s\begin{bmatrix}
0& m \\
 & 0
\end{bmatrix}s^{-1}
\right\}_{m=0,1,...,p-1}.
\end{split}
\end{equation*}
The elements in $\mathfrak{sl}_2(\mathbb{F}_p)$ that can be conjugated into 
$\left\{ \begin{bmatrix}
0 & m \\
  & 0
\end{bmatrix} \right\}_{m=1,...,p-1}$ form two conjugacy classes, with representatives $u_0:=\begin{bmatrix}
0 & 1 \\
  & 0
\end{bmatrix}$ and $v_0:=\begin{bmatrix}
0 & \slashed{\square} \\
  & 0
\end{bmatrix}$, respectively, and both classes have the cardinal 
$$\frac{\#\mathrm{SL}_2(\mathbb{Z}/p^r)}{\# C_{\mathrm{SL}_2(\mathbb{Z}/p^r)} (u_0) }=\frac{p(p-1)(p+1)p^{3(r-1)}}{2p\cdot p^{3(r-1)}}=\frac{(p-1)(p+1)}{2}.$$
Moreover, direct matrix computation tells that for $h\in \mathrm{SL}_2(\mathbb{Z}/p^r)$ one has either $\left(\frac{b(u_0^h)}{p}\right)=\left(\frac{b(u_0)}{p}\right)$ or $\left(\frac{b(u_0^h)}{p}\right)=0$; the latter case happens only when $u_0^h$ is a lower triangular matrix (and there are totally $\frac{p-1}{2}$ such possible lower triangular matrices). Combining these computations with \eqref{temp4} we get
\begin{equation}\label{temp5}
\begin{split}
n_+-n_-
=\frac{-2\cdot \sqrt{ (-1)^{\frac{p-1}{2}}\cdot p}}{(p-1)\cdot p^3} 
&\times 
\left(\frac{(p-1)(p+1)}{2}-\frac{p-1}{2}\right)\\
 &\times
\left(
\sum_{\substack{u_0x=x;\\ x\in\pi^{-1}(\infty)}}\frac{1}{1-d(u_0)_x} 
-\sum_{\substack{v_0x=x;\\ x\in\pi^{-1}(\infty)}}\frac{1}{1-d(v_0)_x}
\right).
\end{split}
\end{equation}
We shall simplify this sum by a further step. Consider $\frac{1}{p^r}\in\mathbb{Q}\subseteq\mathbb{H}^*$, and let $x_0$ be the image of $\frac{1}{p^r}$ in $X(p^r)=\Gamma(p^r)\backslash\mathbb{H}^*$. Then $x_0$ is equivalent to $\infty$ in $X(p^r)$ (see \cite[Lemma~3.8.2]{Diamond_et_Shurman_book2005}). Thus $u_0x_0=v_0x_0=x_0$ and $C_{\mathrm{SL}_2(\mathbb{Z}/p^r)}(x_0)=\widetilde{G}_{\infty}$. So, for  $h\in \mathrm{SL}_2(\mathbb{Z}/p^r)$ we have:
$$u(x_0^h)=x_0^h \iff u\in h^{-1}\widetilde{G}_{\infty}h \iff hu_0h^{-1}\in \widetilde{G}_{\infty} $$
and the last condition is equivalent to that $h$ modulo $p$ is upper triangular; denote the set consisting of those $h\in \mathrm{SL}_2(\mathbb{Z}/p^r)$ by $P$.  Then
\begin{equation}\label{temp6}
\begin{split}
\sum_{\substack{u_0x=x;\\ x\in\pi^{-1}(\infty)}}\frac{1}{1-d(u_0)_x} 
&=\sum_{h\in P/\widetilde{G}_{\infty}}\frac{1}{1-d(u_0)_{x_0^h}} \\
&=\sum_{h\in P/\widetilde{G}_{\infty}}\frac{1}{1-d(u_0^h)_{x_0}} \\
&=\frac{p^{2(r-1)}}{2}\sum_{\substack{h=\mathrm{diag}(a,a^{-1});\\ a\in\mathbb{F}_p\setminus\{0\} }}\frac{1}{1-d(u_0^h)_{x_0}} \\
&=\frac{p^{2(r-1)}}{2}\sum_{u'\in \mathfrak{R} }\frac{1}{1-du'_{x_0}} 
\end{split}
\end{equation}
in which $\mathfrak{R}:=\left\{ \begin{bmatrix}
1 & p^{r-1}a^2 \\
  & 1
\end{bmatrix} \right\}_{a\in \mathbb{F}_p\setminus\{0\}}\subseteq \mathfrak{sl}_2(\mathbb{F}_p)$ (viewed as a subset of $\mathrm{SL}_2(\mathbb{Z}/p^r)$). Similarly we have
\begin{equation}\label{temp7}
\sum_{\substack{v_0x=x;\\ x\in\pi^{-1}(\infty)}}\frac{1}{1-d(v_0)_x} 
=\frac{p^{2(r-1)}}{2}\sum_{v'\in \mathfrak{N} }\frac{1}{1-dv'_{x_0}} 
\end{equation}
in which $\mathfrak{N}:=\left\{ \begin{bmatrix}
1 & p^{r-1}a^2\slashed{\square} \\
  & 1
\end{bmatrix} \right\}_{a\in \mathbb{F}_p\setminus\{0\}}\subseteq \mathfrak{sl}_2(\mathbb{F}_p)$ (viewed as a subset of $\mathrm{SL}_2(\mathbb{Z}/p^r)$).

\vspace{2mm} Combining \eqref{temp5}, \eqref{temp6}, and \eqref{temp7} we get
\begin{equation}\label{temp8}
n_+-n_-
=\frac{- p^{2(r-2)}\cdot \sqrt{ (-1)^{\frac{p-1}{2}}\cdot p}}{2} 
\cdot
\left(
\sum_{u'\in \mathfrak{R} }\frac{1}{1-du'_{x_0}} 
-\sum_{v'\in \mathfrak{N} }\frac{1}{1-dv'_{x_0}} 
\right).
\end{equation}
In the below we follow \cite[Page~537]{Shimura_1973_JMSJ} to compute the differentials.

\vspace{2mm} Take $\beta=\begin{bmatrix}
1 &   \\
-p^r & 1
\end{bmatrix}\in\mathrm{SL}_2(\mathbb{Z}/p^r)$; since $\beta\cdot x_0=\infty$,
$$t(z)=\exp(2\pi i\beta(z)/p^r)$$
is a local coordinate system near $x_0=\frac{1}{p^r}$. Meanwhile, note that $u'^{\beta}\cdot u'^{-1}\in \Gamma(p^r)$ for any $u'=\begin{bmatrix}
1 & p^{r-1}a^2 \\
  & 1
\end{bmatrix}\in \mathfrak{R}$, 
so $u'^{\beta}$ represents the operator $u'$ on $X(p^r)$. Thus the action of $u'\in \mathfrak{R}$ near $x_0$, in terms of the above local coordinate $t(z)$, becomes
\begin{equation*}
\begin{split}
t(u'^{\beta}z)
&=\exp(2\pi i \beta (u'^{\beta}z)/p^r)=\exp(2\pi i u'(\beta(z))/p^r)\\
&=\exp(2\pi i ({\beta}(z)+p^{r-1}a^2)/p^r)=  t(z)\cdot\zeta_p^{a^2},
\end{split}
\end{equation*}
which means that $du'_{x_0}=\zeta_p^{a^2}$. Therefore
\begin{equation}\label{temp1-1}
\sum_{u'\in \mathfrak{R} }\frac{1}{1-du'_{x_0}}
 = \sum_{a\in \mathbb{F}_p\setminus\{0\} }\frac{1}{1-\zeta_p^{a^2}}
\end{equation}
and similar computation gives that
\begin{equation}\label{temp1-2}
\sum_{v'\in \mathfrak{N} }\frac{1}{1-dv'_{x_0}}
 = \sum_{a\in \mathbb{F}_p\setminus\{0\} }\frac{1}{1-\zeta_p^{\slashed{\square}a^2}}.
\end{equation}
Furthermore, for any $b=1,...,p-1$,  we have the following useful identity in \cite[Page~9]{Gross_2018_Hecke}
\begin{equation}\label{temp-trick}
\frac{1}{1-\zeta_p^b}=-\frac{1}{p} \cdot \left(\zeta_p^b+2\zeta_p^{2b}+...+(p-1)\zeta_p^{(p-1)b}\right),
\end{equation} 
from which one sees easily that
\begin{equation}\label{temp1-3}
\sum_{a\in \mathbb{F}_p\setminus\{0\}}\frac{1}{1-\zeta_p^{a}}=\frac{p-1}{2}.
\end{equation}
Putting \eqref{temp1-1}, \eqref{temp1-2}, \eqref{temp1-3} into \eqref{temp8} we get
\begin{equation}\label{temp9}
n_+-n_-
=\frac{- p^{2(r-2)}\cdot \sqrt{ (-1)^{\frac{p-1}{2}}\cdot p}}{2} 
\cdot
\left(
-p+1+2\cdot \sum_{a\in \mathbb{F}_p\setminus\{0\} }\frac{1}{1-\zeta_p^{a^2}} 
\right),
\end{equation}
in which $\sum_{a\in \mathbb{F}_p\setminus\{0\} }\frac{1}{1-\zeta_p^{a^2}}$ is related to the quadratic Gauss sum.

\vspace{2mm} If $p>3$ and $p\equiv 3\mod 4$, then by \cite[Proposition~5]{Gross_2018_Hecke} we know
$$\sum_{a\in \mathbb{F}_p\setminus\{0\} }\frac{1}{1-\zeta_p^{a^2}}=\frac{p-1}{2}+h(-p)\sqrt{-p},$$
so \eqref{temp9} gives 
$$n_+-n_-=p^{2r-3}\cdot h(-p),$$
as desired. 

\vspace{2mm} If $p\equiv 1\mod 4$, then applying \eqref{temp-trick} to $b=a^2$ we get
$$\sum_{a\in \mathbb{F}_p\setminus\{0\} }\frac{1}{1-\zeta_p^{a^2}} =-\sum_{k=1,...,p-1}\frac{k}{p}\sum_{a=1,...,p-1}\zeta_p^{ka^2}
=\frac{p-1}{2}-\frac{\sqrt{p}}{p}
\cdot\sum_{k=1,...,p-1}k\left( \frac{k}{p} \right) $$
(the last equality follows from properties of Gauss sums; see \cite[(3.29)]{Iwaniec_et_Kowalski_AnalyNumThyBk_2004}). Note that since $p\equiv 1\mod 4$, we have
$$\Sigma:=\sum_{k=1,...,p-1}k\left( \frac{k}{p} \right)
=\sum_{k=1,...,p-1}k\left( \frac{p-k}{p} \right)
=\sum_{k=1,...,p-1}(p-k)\left( \frac{k}{p} \right),$$
so 
$$\Sigma+\Sigma=p\cdot \sum_{k=1,...,p-1}\left( \frac{k}{p} \right)=0.$$
Thus $\Sigma=0$, and hence \eqref{temp9} implies that $n_+-n_-=0$ in this case. 

\vspace{2mm} Finally, if $p=3$, then a direct computation with \eqref{temp9} gives that $n_+-n_-=3^{2r-4}$.
\end{proof}

\section{Groups over finite rings}\label{sec:group level}

Since $p$ is odd and $r\geq2$, every irreducible representation of $\mathrm{SL}_2(\mathbb{Z}/p^r)$ is assigned an orbit in $\mathfrak{sl}_2(\mathbb{F}_p)$ via the trace form (see \cite[Section~2]{ChenStasinski_2023_algebraisation_II}). Let us denote by $\mathcal{U}$ (resp.\ $\mathcal{V}$) the set of equivalent classes of irreducible representations of $\mathrm{SL}_2(\mathbb{Z}/p^r)$ corresponding to the orbit containing $u=\begin{bmatrix}
0 & 0   \\
1 & 0
\end{bmatrix}$ (resp.\ $v=\begin{bmatrix}
0 & 0   \\
\slashed{\square} & 0
\end{bmatrix}$). In particular, $\mathcal{U}\sqcup\mathcal{V}$ consists of all regular nilpotent orbit representations (in the sense of \cite[Definition~2.3]{ChenStasinski_2023_algebraisation_II}) of $\mathrm{SL}_2(\mathbb{Z}/p^r)$. 

\vspace{2mm} For each $
\sigma\in \mathcal{U}\cup\mathcal{V}$, let $m_{\sigma}$ be the multiplicity of $\sigma$ in $S_2(\Gamma(p^r))$. Hecke's result can be extended to general $r\geq2$ in the following form:

\begin{coro}\label{coro: extend Hecke}
Suppose that $p>3$ and $p\equiv 3\mod 4$. Then
$$\sum_{x\in \mathcal{U}} m_x
-\sum_{y\in \mathcal{V}} m_y
=p^{r-1}\cdot h(-p).$$
\end{coro}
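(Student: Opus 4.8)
The plan is to derive Corollary~\ref{coro: extend Hecke} from Theorem~\ref{thm:main} by ``summing up'' the Lie algebra statement over the fibres of the map from irreducible representations of $\mathrm{SL}_2(\mathbb{Z}/p^r)$ to invariant characters of $\mathfrak{sl}_2(\mathbb{F}_p)$. The key observation is that the restriction-to-$\mathfrak{sl}_2(\mathbb{F}_p)$ of an irreducible representation $\sigma$ of $\mathrm{SL}_2(\mathbb{Z}/p^r)$ is, by Clifford's theorem, a sum of $\mathrm{SL}_2(\mathbb{Z}/p^r)$-conjugate irreducible characters of the abelian group $\mathfrak{sl}_2(\mathbb{F}_p)$, and the invariant character one gets by averaging over the $G(\mathbb{F}_p)$-orbit is (a multiple of) $\chi_+$ precisely when $\sigma\in\mathcal{U}$ and (a multiple of) $\chi_-$ precisely when $\sigma\in\mathcal{V}$. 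So I would first record, for each $\sigma\in\mathcal{U}$, the multiplicity $c_\sigma$ with which (the irreducible constituents of) $\chi_+$ occur in $\mathrm{Res}^{\mathrm{SL}_2(\mathbb{Z}/p^r)}_{\mathfrak{sl}_2(\mathbb{F}_p)}\sigma$, and likewise $c_\sigma$ for $\sigma\in\mathcal{V}$ with $\chi_-$; these are the Clifford-theory ``ramification'' numbers, and the point is that they do not depend on whether we are in the $u$-orbit or the $v$-orbit (both nilpotent orbits have the same stabiliser structure in $\mathrm{SL}_2(\mathbb{Z}/p^r)$), so in fact $c_\sigma$ takes a common value, or at worst a common list of values with the same distribution, on $\mathcal{U}$ and on $\mathcal{V}$.

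Granting that, the chain of identities is: $n_+ = \sum_{x\in\mathcal{U}} m_x \cdot (\text{contribution of each }x\text{ to the multiplicity of }\chi_+)$, and similarly for $n_-$. More precisely, since $\chi_\pm$ has $\frac{p-1}{2}$ irreducible constituents each occurring with multiplicity $n_\pm$ in $S_2(\Gamma(p^r))$, and since $S_2(\Gamma(p^r))\!\downarrow_{\mathfrak{sl}_2(\mathbb{F}_p)}$ decomposes through the $\mathrm{SL}_2(\mathbb{Z}/p^r)$-representations as $\bigoplus_\sigma m_\sigma\,(\sigma\!\downarrow_{\mathfrak{sl}_2(\mathbb{F}_p)})$, comparing the multiplicity of a fixed constituent of $\chi_+$ on both sides gives $n_+ = \sum_{x\in\mathcal{U}} m_x\, k$ and $n_- = \sum_{y\in\mathcal{V}} m_y\, k$ for the common constant $k$ equal to the number of constituents of $\chi_+$ appearing in $\sigma\!\downarrow$ divided by $\frac{p-1}{2}$ — one must check this $k$ is genuinely the same for the $+$ and $-$ sides, which again comes down to the symmetry $u \leftrightarrow v$ under $\mathrm{SL}_2(\mathbb{Z}/p^r)$-conjugation-after-scaling. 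Dividing Theorem~\ref{thm:main}'s identity $n_+-n_- = p^{2r-3} h(-p)$ by $k$ then yields $\sum_{x\in\mathcal{U}} m_x - \sum_{y\in\mathcal{V}} m_y = p^{2r-3}h(-p)/k$, so the remaining task is the purely group-theoretic computation that $k = p^{r-2}$, whence the right-hand side is $p^{r-1}h(-p)$.

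Thus the concrete steps are: (1) set up Clifford theory for the normal abelian subgroup $\mathfrak{sl}_2(\mathbb{F}_p)\trianglelefteq\mathrm{SL}_2(\mathbb{Z}/p^r)$ and identify, for $\sigma\in\mathcal{U}\cup\mathcal{V}$, the inertia subgroup of a linear character $\psi(\mathrm{Tr}(u\cdot -))$ (resp.\ with $v$) inside $\mathrm{SL}_2(\mathbb{Z}/p^r)$; (2) compute the size of the $\mathrm{SL}_2(\mathbb{Z}/p^r)$-orbit of such a character and the number of its conjugates lying in the support of $\chi_+$ (resp.\ $\chi_-$) — here one uses that $\chi_\pm$ is supported on the $\mathrm{SL}_2(\mathbb{F}_p)$-orbit of $u$ (resp.\ $v$), which is a single conjugacy class of the Levi/torus as already analysed in the proof of Theorem~\ref{thm:main}; (3) extract the ratio $k$ and conclude. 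The main obstacle I anticipate is step (2): pinning down exactly how many of the $\frac{p-1}{2}$ constituents of $\chi_+$ sit inside one restricted $\sigma$, which requires knowing the degree and ramification behaviour of the regular-nilpotent-orbit representations of $\mathrm{SL}_2(\mathbb{Z}/p^r)$ (a $\dim\sigma = \tfrac{p-1}{2}p^{r-1}$ type count) and checking this is insensitive to the quadratic-residue class of the nilpotent — i.e.\ that $\mathcal{U}$ and $\mathcal{V}$ are ``isomorphic'' as fibres. Once that parallelism is in hand, the corollary is immediate from Theorem~\ref{thm:main}; conveniently, when $p\equiv1\bmod 4$ both sides vanish and when $p=3$ one gets the degenerate $3^{r-2}$, consistent with the theorem.
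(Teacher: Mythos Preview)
Your strategy is exactly the paper's: use Clifford theory for the abelian normal subgroup $\mathfrak{sl}_2(\mathbb{F}_p)\trianglelefteq\mathrm{SL}_2(\mathbb{Z}/p^r)$ to write $n_\pm = k\cdot\sum m_\sigma$ and then divide Theorem~\ref{thm:main} by $k=p^{r-2}$. However, several of your intermediate assertions are wrong and, taken literally, would not produce $k=p^{r-2}$.

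First, $\chi_\pm$ does not have $\frac{p-1}{2}$ irreducible constituents; it has $\chi_\pm(1)=\frac{(p-1)(p+1)}{2}$, the size of the regular nilpotent $\mathrm{SL}_2(\mathbb{F}_p)$-orbit (you are conflating $\chi_\pm$ with the auxiliary $N_\pm$ from the proof of Theorem~\ref{thm:main}). Second, the dimension of each $\sigma\in\mathcal{U}\cup\mathcal{V}$ is $\frac{p^{r-2}(p-1)(p+1)}{2}$, not $\frac{p-1}{2}p^{r-1}$. Third, your proposed step~(2) --- counting ``how many of the constituents of $\chi_+$ sit inside one restricted $\sigma$'' --- is unnecessary: the conjugation action of $\mathrm{SL}_2(\mathbb{Z}/p^r)$ on $\mathfrak{sl}_2(\mathbb{F}_p)$ factors through $\mathrm{SL}_2(\mathbb{F}_p)$ (Section~\ref{sec: invariant characters}), so the $\mathrm{SL}_2(\mathbb{Z}/p^r)$-orbit of the linear character $\psi(\mathrm{Tr}(u\cdot-))$ \emph{equals} its $\mathrm{SL}_2(\mathbb{F}_p)$-orbit. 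Clifford's theorem therefore gives $\sigma\!\downarrow_{\mathfrak{sl}_2(\mathbb{F}_p)} = e\cdot\chi_+$ on the nose for every $\sigma\in\mathcal{U}$ (and $e\cdot\chi_-$ for $\sigma\in\mathcal{V}$), with a single ramification index $e$. Comparing dimensions yields $e=\dim\sigma/\chi_+(1)=p^{r-2}$ immediately, and this is your $k$; the symmetry between $\mathcal{U}$ and $\mathcal{V}$ is then automatic since both orbits and both families of representations have the same sizes. With these corrections your argument coincides with the paper's two-line proof.
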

\begin{proof}
From the proof of Theorem~\ref{thm:main} we see that $\chi_{\pm}(1)=\frac{(p-1)(p+1)}{2}$; meanwhile, each regular nilpotent orbit representation of $\mathrm{SL}_2(\mathbb{Z}/p^r)$ has dimension $\frac{p^{r-2}(p-1)(p+1)}{2}$ (see \cite[4.3]{Shalika_SL_2_2004}). So by Clifford's theorem 
$$\sum_{x\in \mathcal{U}} m_x\cdot p^{r-2}=n_{+}
\qquad\textrm{and}\qquad
\sum_{y\in \mathcal{V}} m_y\cdot p^{r-2}=n_{-}.$$
Now the assertion follows from Theorem~\ref{thm:main}.
\end{proof}

\begin{remark}\label{remark:single pair}
When $p\equiv 3\mod 4$, taking dual ${\sigma}\mapsto\bar{\sigma}$ gives a bijection between $\mathcal{U}$ and $\mathcal{V}$, as in this case one has $\overline{\chi_+}=\chi_-$. Then a natural question is whether the single pair version of Corollary~\ref{coro: extend Hecke} holds true, namely, should we have
$$m_{\sigma}-m_{\bar{\sigma}}=\frac{p^{r-1}\cdot h(-p)}{|\mathcal{U}|}$$ 
for every $\sigma\in\mathcal{U}$? The answer is negative: According to \cite{Shalika_SL_2_2004} one has $|\mathcal{U}|=2p^{r-1}$, but it is well-known that $\frac{h(-p)}{2}$ cannot be an integer for $p\equiv 3\mod 4$ (this can also be seen by the parity assertion in the below Proposition~\ref{prop:multi sum} combined with Theorem~\ref{thm:main}).
\end{remark}

\section{The sum space}\label{sec:sum space}

Let $S:=S_2(\Gamma(p^r))\oplus \overline{S_2(\Gamma(p^r))}$ where $\overline{S_2(\Gamma(p^r))}$ is understood as the dual representation of $S_2(\Gamma(p^r))$. This sum space allows us to determine $n_++n_-$:

\begin{prop}\label{prop:multi sum}
We have
$$n_++n_-= \frac{p^{2(r-2)}(p-1)(p^r+p^{r-1}-6)}{12}.$$
In particular, $n_++n_-$ is odd if and only if $p\equiv 3\mod 4$.
\end{prop}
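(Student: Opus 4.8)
The plan is to run the same argument as in the proof of Theorem~\ref{thm:main}, but with the \emph{sum} character $N_++N_-$ in place of $N_+-N_-$ and with the sum representation space $S=S_2(\Gamma(p^r))\oplus\overline{S_2(\Gamma(p^r))}$ in place of $S_2(\Gamma(p^r))$; passing to $S$ is precisely what makes the differentials $dg_x$ disappear. As in the proof of Theorem~\ref{thm:main}, $N_+$ and $N_-$ are multiplicity-free of degree $\frac{p-1}{2}$ with disjoint sets of one-dimensional constituents, each constituent of $N_+$ (resp.\ $N_-$) occurring in $S_2(\Gamma(p^r))$ with multiplicity $n_+$ (resp.\ $n_-$); moreover $(N_++N_-)(g)=\sum_{t\in\mathbb{F}_p^{\times}}\psi(t\cdot b(g))$, so $N_++N_-$ is real-valued and equals $p-1$ when $b(g)=0$ and $-1$ otherwise. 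Writing $\chi_r$ for the character of $S_2(\Gamma(p^r))$ as in that proof, the character of $S$ is $\chi_r+\overline{\chi_r}$, and since $N_++N_-$ is real-valued,
\[
\langle N_++N_-,\ \chi_r+\overline{\chi_r}\rangle_{\mathfrak{sl}_2(\mathbb{F}_p)}=2\,\langle N_++N_-,\ \chi_r\rangle_{\mathfrak{sl}_2(\mathbb{F}_p)}=(p-1)(n_++n_-),
\]
whence
\[
n_++n_-=\frac{1}{(p-1)p^3}\sum_{g\in\mathfrak{sl}_2(\mathbb{F}_p)}(N_++N_-)(g)\cdot\bigl(\chi_r(g^{-1})+\chi_r(g)\bigr).
\]
Note already that $\sum_{g\in\mathfrak{sl}_2(\mathbb{F}_p)}(N_++N_-)(g)=p\cdot p^2-p^3=0$, so $\sum_{g\ne 0}(N_++N_-)(g)=-(p-1)$, and that no Gauss sum appears here at all.

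Next I would evaluate $\chi_r(g^{-1})+\chi_r(g)$. For $g=0$ it equals $2\dim S_2(\Gamma(p^r))=2\,\mathrm{genus}(X(p^r))$. For $g\ne 0$ the automorphism $g$ of $X(p^r)$ is non-trivial (for $r\ge 2$ the subgroup $\mathfrak{sl}_2(\mathbb{F}_p)\subseteq\mathrm{SL}_2(\mathbb{Z}/p^r)$ still injects into the Galois group $\mathrm{SL}_2(\mathbb{Z}/p^r)/\{\pm I_2\}$), so it has finitely many, necessarily simple, fixed points; applying the Woods Hole formula as in the proof of Theorem~\ref{thm:main}, together with $\chi_r(g)=\overline{\chi_r(g^{-1})}$ and the elementary identity $\frac{1}{1-z}+\frac{1}{1-z^{-1}}=1$ (valid for any root of unity $z\ne 1$), we get
\[
\chi_r(g^{-1})+\chi_r(g)=2-\sum_{x\in X(p^r);\ gx=x}\Bigl(\tfrac{1}{1-dg_x}+\tfrac{1}{1-dg_x^{-1}}\Bigr)=2-N(g),\qquad N(g):=\#\{x\in X(p^r):gx=x\}.
\]
(Equivalently, this is the topological Lefschetz fixed point formula applied to $H^1(X(p^r);\mathbb{C})\cong S$.) The problem is thus reduced to computing $\sum_{g\ne 0}(N_++N_-)(g)N(g)$ and $\mathrm{genus}(X(p^r))$.

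For the combinatorial input I would recycle the fixed point analysis in the proof of Theorem~\ref{thm:main}: a nonzero $g\in\mathfrak{sl}_2(\mathbb{F}_p)$ has a fixed point on $X(p^r)$ only if $g$ is conjugate into $\widetilde{G}_{\infty}\cap\mathfrak{sl}_2(\mathbb{F}_p)$, i.e.\ only if $g$ is a nonzero nilpotent; the nonzero nilpotents form the two $\mathrm{SL}_2(\mathbb{F}_p)$-orbits $O_{u_0}$ and $O_{v_0}$, each of size $\frac{p^2-1}{2}$, and $g\mapsto N(g)$ is constant on each (call the values $N_{u_0}$ and $N_{v_0}$). Double counting $\sum_{g\text{ nonzero nilpotent}}N(g)=\sum_{x\text{ a cusp of }X(p^r)}\#\{g\ne 0\text{ nilpotent}:gx=x\}$, using $\#C_{\mathfrak{sl}_2(\mathbb{F}_p)}(x)=p$ for each cusp $x$ (proof of Theorem~\ref{thm:main}) and that $X(p^r)$ has $\tfrac{p^{2r-2}(p^2-1)}{2}$ cusps (see e.g.\ \cite{Diamond_et_Shurman_book2005}), one gets $N_{u_0}+N_{v_0}=(p-1)p^{2r-2}$. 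By the computation of $\left(\tfrac{b(u_0^h)}{p}\right)$ in the proof of Theorem~\ref{thm:main}, exactly $\tfrac{p-1}{2}$ elements of $O_{u_0}$ (the lower triangular ones) satisfy $b(g)=0$ and the remaining $\tfrac{p(p-1)}{2}$ satisfy $b(g)\ne 0$, and likewise for $O_{v_0}$; hence
\[
\sum_{g\in O_{u_0}}(N_++N_-)(g)=\sum_{g\in O_{v_0}}(N_++N_-)(g)=\tfrac{p-1}{2}(p-1)-\tfrac{p(p-1)}{2}=-\tfrac{p-1}{2},
\]
so $\sum_{g\ne 0}(N_++N_-)(g)N(g)=-\tfrac{p-1}{2}(N_{u_0}+N_{v_0})=-\tfrac{(p-1)^2p^{2r-2}}{2}$. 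Substituting this, together with $\mathrm{genus}(X(p^r))=1+\tfrac{(p^r-6)p^{2r-2}(p^2-1)}{24}$, into the expression for $n_++n_-$ and simplifying yields $n_++n_-=\tfrac{p^{2(r-2)}(p-1)(p^r+p^{r-1}-6)}{12}$.

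For the parity statement I would rewrite this as $n_++n_-=\tfrac{p^{3r-5}(p^2-1)}{12}-\tfrac{p^{2r-4}(p-1)}{2}$, where both summands are integers since $r\ge 2$: the first is even (because $24\mid p^2-1$ when $\gcd(p,6)=1$, and by direct inspection when $p=3$), whereas the second equals $p^{2r-4}\cdot\tfrac{p-1}{2}\equiv\tfrac{p-1}{2}\pmod 2$, which is odd exactly when $p\equiv 3\bmod 4$; hence $n_++n_-$ is odd iff $p\equiv 3\bmod 4$. The main obstacle is not any single step but the volume of bookkeeping in the fixed point count above; fortunately nearly all of it is already carried out in the proof of Theorem~\ref{thm:main}, and the one genuinely new idea is that on the sum space $S$ the identity $\tfrac{1}{1-z}+\tfrac{1}{1-z^{-1}}=1$ removes the differentials $dg_x$ entirely, so the Gauss sum evaluations of Theorem~\ref{thm:main} are replaced by an elementary count.
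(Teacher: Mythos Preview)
Your argument is correct, and it takes a genuinely different route from the paper's own proof. The paper does not run the fixed point formula again; instead it invokes a known virtual decomposition of $S$ as a $\mathrm{SL}_2(\mathbb{Z}/p^r)/\{\pm I_2\}$-module,
\[
S\ \cong\ \mathbb{C}[G]-\mathbb{C}[G/G_i]-\mathbb{C}[G/G_{e^{2\pi i/3}}]-\mathbb{C}[G/G_\infty]+2\cdot 1_G,
\]
then restricts each summand to $\mathfrak{sl}_2(\mathbb{F}_p)$ via Mackey's formula and reads off the multiplicities of $\chi_\pm$. Your approach stays entirely within the framework of Theorem~\ref{thm:main}: you pair $N_++N_-$ with the character of $S$ and use the Woods Hole formula, the new ingredient being that on the sum space the identity $\tfrac{1}{1-z}+\tfrac{1}{1-z^{-1}}=1$ collapses the holomorphic Lefschetz expression to the purely topological fixed point count $\chi_r(g)+\chi_r(g^{-1})=2-N(g)$. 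What the paper's method buys is a full description of $\mathrm{Res}_{\mathfrak{sl}_2(\mathbb{F}_p)}S$ (from which one can extract the multiplicity of \emph{every} invariant character, not only $\chi_\pm$); what your method buys is self-containment (no appeal to the external decomposition of $S$) and a pleasing parallelism with the proof of Theorem~\ref{thm:main}, recycling its fixed point analysis almost verbatim. The two are not unrelated: the virtual decomposition the paper quotes is itself an equivariant Riemann--Hurwitz statement, which is the representation-theoretic shadow of the Lefschetz count you perform.
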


\begin{proof}
First, let $G$ be $\mathrm{SL}_2(\mathbb{Z}/p^r)/\{ \pm I_2 \}$, and for $j\in\{i, e^{2\pi i/3}, \infty \}$  let $G_j=\widetilde{G}_j/\{ \pm I_2 \}$ (these $\widetilde{G}_j$'s are defined in \eqref{inertia groups}). Then we have (as representations of $G$)
\begin{equation}\label{temp-d}
S\cong \mathbb{C}[G]-\mathbb{C}[G/G_i]-\mathbb{C}[G/G_{e^{2\pi i/3}}]-\mathbb{C}[G/G_{\infty}]+2\cdot 1_G;
\end{equation}
see \cite[Page~31]{Weinstein_2007_thesis}. So it suffices to decompose $\mathrm{Res}_{\mathfrak{sl}_2(\mathbb{F}_p)}\mathrm{Ind}_{G_j}^{G}1$ 
for $j\in \{i, e^{2\pi i/3}, \infty\}$.
By Mackey's intertwining formula we have
\begin{equation}\label{temp-d1}
\begin{split}
\sum_{j\in\{i, e^{2\pi i/3}, \infty \}  }
\mathrm{Res}_{\mathfrak{sl}_2(\mathbb{F}_p)}\mathrm{Ind}_{G_j}^{G}1
=&\sum_{j \in\{i, e^{2\pi i/3}, \infty \} }
\mathrm{Res}_{\mathfrak{sl}_2(\mathbb{F}_p)}\mathrm{Ind}_{\widetilde{G}_j}^{\mathrm{SL}_2(\mathbb{Z}/p^r)}1\\
=&\sum_{ j\in\{i, e^{2\pi i/3}, \infty \} }
\sum_{s\in \mathfrak{sl}_2(\mathbb{F}_p)\backslash \mathrm{SL}_2(\mathbb{Z}/p^r)/\widetilde{G}_j }
\mathrm{Ind}^{  \mathfrak{sl}_2(\mathbb{F}_p) }_{  \mathfrak{sl}_2(\mathbb{F}_p) \cap {^s\widetilde{G}_j}  } 1 \\
=&
\sum_{ j=i,e^{2\pi i/3} } 
\# \left(\mathfrak{sl}_2(\mathbb{F}_p)\backslash \mathrm{SL}_2(\mathbb{Z}/p^r)/\widetilde{G}_j   \right)
\mathbb{C}[\mathfrak{sl}_2(\mathbb{F}_p)]\\
&+\sum_{s\in \mathfrak{sl}_2(\mathbb{F}_p)\backslash \mathrm{SL}_2(\mathbb{Z}/p^r)/\widetilde{G}_{\infty} }
\mathrm{Ind}^{  \mathfrak{sl}_2(\mathbb{F}_p) }_{  \mathfrak{sl}_2(\mathbb{F}_p) \cap {^s\widetilde{G}_{\infty}}  } 1\\
= &\ \frac{5(p^2-1)p^{3r-5}}{12} 
\mathbb{C}[\mathfrak{sl}_2(\mathbb{F}_p)]\\
&+
\sum_{s\in \mathfrak{sl}_2(\mathbb{F}_p)\backslash \mathrm{SL}_2(\mathbb{Z}/p^r)/\widetilde{G}_{\infty} }
\mathrm{Ind}^{  \mathfrak{sl}_2(\mathbb{F}_p) }_{  \mathfrak{sl}_2(\mathbb{F}_p) \cap {^s\widetilde{G}_{\infty}}  } 1,
\end{split}
\end{equation}
where the  last row follows from that, when $j\neq\infty$, one has $\mathfrak{sl}_2(\mathbb{F}_p)\cap \widetilde{G}_j=1$, and hence
$$\# \left(\mathfrak{sl}_2(\mathbb{F}_p)\backslash \mathrm{SL}_2(\mathbb{Z}/p^r)/\widetilde{G}_j   \right)
=\frac{|\mathrm{SL}_2(\mathbb{Z}/p^r)|}{|\mathfrak{sl}_2(\mathbb{F}_p)|\cdot|\widetilde{G}_j|}=\frac{(p^2-1)p^{3r-5}}{|\widetilde{G}_j|}.$$
Note that 
$$ \mathfrak{sl}_2(\mathbb{F}_p)\backslash \mathrm{SL}_2(\mathbb{Z}/p^r)/\widetilde{G}_{\infty} 
=\mathrm{SL}_2(\mathbb{Z}/p^{r-1}) / (\widetilde{G}_{\infty}/\mathfrak{sl}_2(\mathbb{F}_p)\cap \widetilde{G}_{\infty})$$  
(which has cardinal $\frac{(p^2-1)p^{2(r-2)}}{2}$), and that
$$\mathrm{Ind}^{  \mathfrak{sl}_2(\mathbb{F}_p) }_{  \mathfrak{sl}_2(\mathbb{F}_p) \cap {^s\widetilde{G}_{\infty}}  } 1
\cong \widetilde{\mathrm{Reg}_{{^s\mathfrak{b}}}},$$
where $\mathfrak{b}\subseteq\mathfrak{sl}_2(\mathbb{F}_p)$ consists of 
the lower triangular matrices and here the symbol $\widetilde{(-)}$ denotes the trivial extension from ${^s\mathfrak{b}}$ to $\mathfrak{sl}_2(\mathbb{F}_p)$ (in particular, the character of  
$\widetilde{\mathrm{Reg}_{{^s\mathfrak{b}}}}$ takes the value $p^2$ at the elements of the form $s \begin{bmatrix}
0 & *   \\
0 & 0
\end{bmatrix} s^{-1}$, and takes the value $0$ at other elements). Therefore by \eqref{temp-d} and \eqref{temp-d1} we get (as representations of $\mathfrak{sl}_2(\mathbb{F}_p)$)
\begin{equation}\label{temp-d2}
\begin{split}
S
&\cong 
\frac{(p^2-1)p^{3r-5}}{12} 
\mathbb{C}[\mathfrak{sl}_2(\mathbb{F}_p)]+2\cdot 1_{\mathfrak{sl}_2(\mathbb{F}_p)}
-\sum_{  \mathrm{SL}_2(\mathbb{Z}/p^{r-1}) / (\widetilde{G}_{\infty}/\mathfrak{sl}_2(\mathbb{F}_p)\cap \widetilde{G}_{\infty})  } 
\widetilde{\mathrm{Reg}_{{^s\mathfrak{b}}}}\\
&\cong
\frac{(p^2-1)p^{3r-5}}{12} 
\mathbb{C}[\mathfrak{sl}_2(\mathbb{F}_p)]+2\cdot 1_{\mathfrak{sl}_2(\mathbb{F}_p)}
-\frac{(p-1)p^{2(r-2)}}{2}\cdot \sum_{  \mathrm{SL}_2(\mathbb{Z}/p^{r-1}) / P  } 
\widetilde{\mathrm{Reg}_{{^s\mathfrak{b}}}},
\end{split}
\end{equation}
in which $P$ denotes the set of matrices in $ \mathrm{SL}_2(\mathbb{Z}/p^{r-1}) $ whose reduction modulo $p$ are upper-triangular. (Note that by \cite[1.6.4]{Shimura_intro_1971} we have 
$\dim S=2\dim S_2(\Gamma(p^r))=2+p^{2r}(p^r-6)(1-p^{-2})/12$, and one can verify by  direct computations that the right hand side of \eqref{temp-d2} also has this dimension.)

\vspace{2mm}
The two invariant characters $\chi_{+}$ and $\chi_-$ are either dual to each other (when $p\equiv 3\mod 4$) or both self-dual (when $p\equiv 1\mod 4$). But by \eqref{temp-d2} the multiplicities of $\chi_{+}$ and ${\chi_-}$ in $S$ are the same, so in both cases we have (again by \eqref{temp-d2})
$$n_++n_-
=\frac{(p^2-1)p^{3r-5}}{12} 
- \frac{(p-1)p^{2(r-2)}}{2}
= \frac{p^{2(r-2)}(p-1)(p^r+p^{r-1}-6)}{12},$$
as desired. The parity assertion follows directly from this expression.
\end{proof}

\begin{remark}
Note that  the last paragraph of the above argument can be used to give a different proof of the   $p\equiv 1\mod 4$ case of Theorem~\ref{thm:main}. Note  also  that using \eqref{temp-d2} one can determine the multiplicity of every irreducible invariant character of $\mathfrak{sl}_2(\mathbb{F}_p)$ in $S$.
\end{remark}

\bibliographystyle{alpha}
\bibliography{zchenrefs}

\end{document}